\def\R{\mathbb{R}}
\def\eps{\varepsilon}
\def\inn{\mathrm{in}}
\def\out{\mathrm{out}}
\def\dd{\mathrm{d}}
\newtheorem{theorem}{Theorem}[section]
\newtheorem{lemma}[theorem]{Lemma}
\theoremstyle{definition}
\theoremstyle{remark}
\newtheorem{remark}[theorem]{Remark}
\title{Finite speed of propagation for a non-local porous medium equation}
 \author{Cyril Imbert\footnote{CNRS, UMR 8050, 
Universit\'e   Paris-Est Cr\'eteil, 
61 av. du G\'en\'eral de Gaulle, 94010 Cr\'eteil, cedex,  France}}
\begin{document}

\maketitle

\begin{abstract}
This note is concerned with proving the finite speed of propagation
for some non-local porous medium equation by adapting  arguments
developed by Caffarelli and V\'azquez (2010). 
\end{abstract}

\paragraph{AMS Classification:} 35K55, 35B30

\paragraph{Keywords:} Non-local porous medium equation, Non-local
non-linear pressure law, finite speed of propagation, fractional
Laplacian

\section{Introduction}

Caffarelli and V\'azquez \cite{cv10a} proved finite speed of
propagation for non-negative weak solutions of
\begin{equation}\label{eq:cv}
\partial_t u = \nabla \cdot ( u \nabla^{\alpha-1} u), \quad t>0, x \in
\R^d 
\end{equation}
with $\alpha \in (0,2)$ and $\nabla^{\alpha-1}$ stands for $\nabla
(-\Delta)^{\frac\alpha2 -1}$.  We adapt here their proof 
 in order to treat the more general case
\begin{equation}\label{eq:main}
\partial_t u = \nabla \cdot ( u \nabla^{\alpha-1} u^{m-1}), \quad t>0, x \in
\R^d
\end{equation}
for $m > m_\alpha: = 1 + d^{-1} (1-\alpha)_+ + 2 (1-\alpha^{-1})_+$.
Equation~\eqref{eq:main} is supplemented with the following initial
condition
\begin{equation}\label{eq:ic}
 u(0,x) = u_0 (x), \quad x \in \R^d
\end{equation}
for some $u_0 \in L^1(\R^d)$. 
The result contained in this note gives a positive answer to a
question posed in \cite{stv} where finite of infinite speed of
propagation is studied for another generalization of \eqref{eq:cv}.
We recall that weak solutions of \eqref{eq:main}-\eqref{eq:ic} are
constructed in \cite{bik} for $m > m_\alpha$ (see also \cite{bik0}).

In the following statement (and the remaining of the note), $B_R$
denotes the ball of radius $R>0$ centered at the origin.
\begin{theorem}[Finite speed of propagation]\label{thm:main}
  Let $m > m_\alpha$ and assume that $u_0 \ge 0$ is integrable and
  supported in $B_{R_0}$.  Then a non-negative weak solution $u$ of
  \eqref{eq:main}-\eqref{eq:ic} is supported in $B_{R(t)}$ where
\[ R(t) = R_0  + C  t^{\frac{1}{\alpha}} \qquad \text{ with } \qquad
 C = C_0 \|u_0\|_\infty^{\frac{m-1}\alpha} \] for some constant $C_0>0$ only
depending on dimension, $\alpha$ and $m$.
\end{theorem}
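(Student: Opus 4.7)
The plan is to adapt the Caffarelli--V\'azquez barrier argument from \cite{cv10a}. Setting $M := \|u_0\|_\infty$ (which is an a priori bound on $u(t,\cdot)$ thanks to the $L^\infty$ contraction property established in \cite{bik}), one may substitute $u^{m-1} \le M^{m-1}$ at will. Writing the equation in transport form $\partial_t u + \nabla\cdot(uv) = 0$ with velocity field $v = -\nabla^{\alpha-1} u^{m-1}$, a scaling heuristic --- a ball of radius $R$ carrying density $\le M$ produces a pressure $p = (-\Delta)^{\alpha/2 - 1} u^{m-1}$ whose gradient at distance $R$ is of order $M^{m-1} R^{1-\alpha}$ --- predicts the support-radius ODE $\dot R \sim M^{m-1} R^{1-\alpha}$, integrating exactly to $R(t) \simeq R_0 + C M^{(m-1)/\alpha} t^{1/\alpha}$. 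This calibrates both the exponent $1/\alpha$ and the structure of $C$.

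To make this heuristic rigorous, for each $r > R_0$ I would introduce a smooth radial test function $\Phi_r$ supported in $\{|x|\ge r\}$ and vanishing like $(|x|-r)_+^\gamma$ near the boundary for a suitably chosen $\gamma$, and monitor the moment
\[
F_r(t) \;=\; \int_{\R^d} u(t,x)\,\Phi_r(x)\,dx,
\]
which vanishes at $t=0$ whenever $r \ge R_0$. The weak formulation gives
\[
F_r'(t) \;=\; -\int u\, v \cdot \nabla \Phi_r \, dx,
\]
and the goal is a differential inequality of the form $F_r'(t) \le C M^{m-1} r^{-\alpha}\, F_r(t)$ up to controlled lower-order pieces. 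Feeding a time-dependent radius $r = R(t)$ into this inequality and running a Gr\"onwall-type argument then forces $F_{R(t)}(t) \equiv 0$, which is precisely the claimed support propagation.

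The decisive obstacle is the nonlocality of the flux: unlike in the classical $\alpha = 2$ setting, $v(x,t)$ does not vanish outside $\mathrm{supp}\, u(t,\cdot)$, so the integration by parts that yields $F_r'$ inherits a long-range contribution from $u^{m-1}\big|_{B_r}$ to $\nabla p$ on $\{|x|>r\}$. Using the Riesz-kernel representation $p(x) = c\int u^{m-1}(y)|x-y|^{2-\alpha-d}\,dy$ together with the shape of $\Phi_r$ near $\partial B_r$, one has to dominate this cross term by $F_r$ itself. The threshold $m > m_\alpha$ enters precisely to secure integrability of the resulting singular integrals: the summand $d^{-1}(1-\alpha)_+$ is active when $\alpha < 1$ and $2(1-\alpha^{-1})_+$ when $\alpha > 1$, so the two regimes must be treated on slightly different footings, which is where the technical work concentrates.
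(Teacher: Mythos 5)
Your central step---the differential inequality $F_r'(t) \le C M^{m-1} r^{-\alpha} F_r(t)$ followed by Gr\"onwall---cannot be made to work, and in fact cannot be true as stated. If it held for each fixed $r>R_0$ with $F_r(0)=0$, Gr\"onwall would give $F_r(t)\equiv 0$ for all $t$ and all $r>R_0$, i.e.\ the support would \emph{never} expand; this is false for \eqref{eq:main} (the Barenblatt-type solutions of \cite{bik}, invoked in the paper to show optimality of the rate $t^{1/\alpha}$, do spread). The obstruction is structural, not technical: in $F_r'(t)=\int u\,v\cdot\nabla\Phi_r\,dx$ the weight $|\nabla\Phi_r|\sim\gamma(|x|-r)_+^{\gamma-1}$ is not dominated by $\Phi_r\sim(|x|-r)_+^{\gamma}$ precisely near $\partial B_r$, which is where the mass responsible for the advance of the support sits. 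Interpolating with $u\le M$ only yields sublinear inequalities of the type $F_r'\lesssim F_r^{1-1/\gamma}+\dots$, and such non-Lipschitz inequalities do not propagate the initial condition $F_r(0)=0$---so the finite speed of propagation is exactly what such an argument fails to capture, rather than what it proves. (A side misattribution: in the paper $m>m_\alpha$ is used only to guarantee existence of weak solutions, not to make singular integrals in the propagation argument converge.) Your scaling heuristic correctly calibrates the exponent $1/\alpha$ and the constant $C_0\|u_0\|_\infty^{(m-1)/\alpha}$, but nothing in the proposal turns it into a proof.

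The paper's route is entirely different: it works with the viscous approximation \eqref{eq:main-reg}, rewrites the equation in the non-divergence form \eqref{eq:non-div}, and performs a pointwise comparison with the explicit barrier $U(t,x)=\omega^\beta+(R(t)-|x|)_+^\beta+\omega^\beta t/T$ of Lemma~\ref{lem:barrier}. At a first contact point, the non-local drift is split into inner/outer and signed pieces, and the ``bad'' inner part is absorbed into a fraction of the ``good'' part of $\Delta p$ via the choice of $\gamma$ (Lemma~\ref{lem:contact}, estimate \eqref{estim:2}); the remaining outer error $-I_{\out,+}(v)$ is estimated in Lemma~\ref{lem:error}, with separate treatments for $\alpha>1$ and $\alpha\le 1$. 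The scaling Lemma~\ref{lem:scaling} then produces the stated constant, and for $\alpha\le1$ an iteration over time intervals $[T_k,T_{k+1}]$, combined with a further rescaling, upgrades the $R_0$-dependent speed $C\lesssim R_0^{1-\alpha+\eps}$ to the $t^{1/\alpha}$ law. If you wish to pursue an integral-estimate proof instead, you would need genuinely different machinery (e.g.\ a local energy or De Giorgi-type iteration in the radius), not a single Gr\"onwall step on a linear-in-$F_r$ inequality.
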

\begin{remark}
  The technical assumption $m > m_\alpha$ is imposed to ensure the
  existence of weak solutions; see \cite{bik}. 
\end{remark}
\begin{remark}
In view of the Barenblatt solutions constructed in \cite{bik}, the
previous estimate of the speed of propagation is
optimal. 
\end{remark}

The remaining of the note is organized as follows.  In preliminary
Section~\ref{sec:prelim}, the equation is written in non-divergence
form, non-local operators appearing in it are written as singular
integrals, invariant scalings are exhibited and an approximation
procedure is recalled.  Section~\ref{sec:contact} is devoted to the
contact analysis. A first lemma for a general barrier is derived in
Subsection~\ref{sub:contact}. The barrier to be used in the proof of
the theorem is constructed in Subsection~\ref{sub:error}. The main
error estimate is obtained in
Subsection~\ref{sub:estim}. Theorem~\ref{thm:main} is finally proved
in Section~\ref{sec:proof}.

\paragraph{Notation.} For $a \in \R$, $a_+$ denotes $\max(0,a)$. An
inequality written as $A \lesssim B$ means that there exists a
constant $C$ only depending on dimension, $\alpha$ and $m$ such that
$A \le C B$. If $\alpha \in (0,1)$, a function $u$ is in
$\mathcal{C}^\alpha$ means that it is $\alpha$-H\"older continuous. If
$\alpha \in (1,2)$, it means that $\nabla u$ is $(\alpha-1)$-H\"older
continuous. For $\alpha \in (0,2)$, a function $u$ is in
$\mathcal{C}^{\alpha+0}$ if it is in $\mathcal{C}^{\alpha+\eps}$ for
some $\eps >0$ and $\alpha + \eps \neq 1$.

\section{Preliminaries}
\label{sec:prelim}

The contact analysis relies on writing Eq.~\eqref{eq:main} into the following
non-diver\-gence form
\begin{equation}\label{eq:non-div}
 \partial_t u = \nabla u \cdot \nabla p + u \Delta p 
\end{equation}
where $p$ stands for the pressure term and is defined as 
\[ p = (-\Delta)^{\frac{\alpha}2-1} u^{m-1}.\]
It is also convenient to write $v = u^{m-1} = G(u)$.

We recall that for a smooth and bounded function $v$, the non-local
operators appearing in \eqref{eq:non-div} have the following singular
integral representations,
\begin{align*}
  \nabla (-\Delta)^{\frac\alpha2 -1} v &= c_{\alpha} \int ( v(x+z)-v(x))z \frac{dz}{|z|^{d+\alpha}}, \\
  - (-\Delta)^{\frac\alpha2} v &= \bar{c}_{\alpha} \int (v(x+z)+
  v(x-z) -2 v(x)) \frac{dz}{|z|^{d+\alpha}}.
\end{align*}

The following elementary lemma makes  the scaling of the
equation precise. 
\begin{lemma}[Scaling]\label{lem:scaling}
If $u$ satisfies \eqref{eq:main} then 
$U(t,x) = A u(Tt,Bx)$ satisfies \eqref{eq:main} as soon as 
\[ T = A^{m-1} B^\alpha.\]
\end{lemma}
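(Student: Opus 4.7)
The plan is to verify the scaling invariance by a direct computation: substitute $U(t,x)=Au(Tt,Bx)$ into Equation~\eqref{eq:main} and track how each term rescales, then choose $T$ so that the time and space scales match.

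First I would compute the easy pieces. The chain rule gives $\partial_t U(t,x) = AT\,(\partial_t u)(Tt,Bx)$ and $\nabla U(t,x)=AB\,(\nabla u)(Tt,Bx)$, while pointwise $U^{m-1}(t,x)=A^{m-1}u^{m-1}(Tt,Bx)$. The only step that requires care is the scaling of the non-local operator $\nabla^{\alpha-1}$. I would use the singular-integral representation recalled just before the lemma: for $f(x)=g(Bx)$, substituting $z=w/B$ in
\[ \nabla^{\alpha-1} f(x) = c_\alpha \int (g(Bx+Bz)-g(Bx))\,z\,\frac{dz}{|z|^{d+\alpha}} \]
yields, after absorbing the Jacobian $B^{-d}$ and the factor $|w|^{-d-\alpha}B^{d+\alpha}$ together with the $1/B$ from $z=w/B$, the identity
\[ \nabla^{\alpha-1}f(x)=B^{\alpha-1}\,(\nabla^{\alpha-1}g)(Bx). \]
Applied to $f=U^{m-1}(t,\cdot)$ and $g=u^{m-1}(Tt,\cdot)$, this gives $\nabla^{\alpha-1}U^{m-1}(t,x)=A^{m-1}B^{\alpha-1}(\nabla^{\alpha-1}u^{m-1})(Tt,Bx)$.

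Combining these, the flux scales as $U\,\nabla^{\alpha-1}U^{m-1}=A^{m}B^{\alpha-1}\,(u\,\nabla^{\alpha-1}u^{m-1})(Tt,Bx)$, and taking one more divergence brings out a factor $B$, yielding $\nabla\!\cdot\!(U\nabla^{\alpha-1}U^{m-1})=A^{m}B^{\alpha}\,[\nabla\!\cdot\!(u\nabla^{\alpha-1}u^{m-1})](Tt,Bx)$. Matching this to $\partial_t U = AT\,(\partial_t u)(Tt,Bx)$ and invoking \eqref{eq:main} for $u$, I need $AT=A^{m}B^{\alpha}$, i.e.\ $T=A^{m-1}B^{\alpha}$, which is exactly the stated condition.

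There is essentially no conceptual obstacle here; the only place where one could slip is the bookkeeping for the fractional operator, and I would double-check that step by re-deriving the $B^{\alpha-1}$ factor from the singular-integral formula (alternatively, one can cross-check it from the Fourier symbol $i\xi|\xi|^{\alpha-2}$, whose degree of homogeneity in $\xi$ is $\alpha-1$, consistent with the same scaling).
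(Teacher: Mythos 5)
Your computation is correct: the chain rule for $\partial_t$ and $\nabla$, the homogeneity $\nabla^{\alpha-1}[g(B\cdot)](x)=B^{\alpha-1}(\nabla^{\alpha-1}g)(Bx)$ derived from the singular-integral (or Fourier) representation, and the resulting balance $AT=A^{m}B^{\alpha}$ give exactly the stated condition $T=A^{m-1}B^{\alpha}$. The paper omits the proof as elementary, and your direct substitution is precisely the intended argument.
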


Consider non-negative solutions of the viscous approximation of
\eqref{eq:main}, i.e.
\begin{equation}\label{eq:main-reg}
\partial_t u = \nabla \cdot ( u \nabla^{\alpha-1} G(u)) + \delta
\Delta u, \quad t>0, x \in \R^d.
\end{equation}
For sufficiently smooth initial data $u_0$, solutions
are at least $\mathcal{C}^2$ with respect to $x$ and $\mathcal{C}^1$ with respect to
$t$.

\section{Contact analysis}
\label{sec:contact}

\subsection{The contact analysis lemma}
\label{sub:contact}

In the following lemma, we analyse what happens when a sufficiently
regular barrier $U$ touches a solution $u$ of \eqref{eq:main-reg} from
above. The monotone term such as $\partial_t u$, $\Delta u$ or
$-(-\Delta)^{\frac\alpha2} u$ are naturally ordered. But this is not
the case for the non-local drift term $\nabla u \cdot \nabla p$. The
idea is to split it is a ``good'' part (i.e. with the same monotony as
$\Delta u$ for instance) and a bad part. It turns out that the bad
part can be controlled by a fraction of the good part; see
\eqref{estim:2} in the proof of the lemma.
\begin{lemma}[Contact analysis]\label{lem:contact}
Let $u$ be a solution of the approximate equation~\ref{eq:main-reg} 
 and $U(t,x)$ be $\mathcal{C}^2((0,+\infty) \times (\R^d \setminus
  B_1))$, radially symmetric w.r.t. $x$, non-increasing
  w.r.t. $|x|$. If
\[ \begin{cases} u \le U \text{ for } (t,x) \text{ in } [0,t_c] \times \R^d, \\
u(t_c,x_c) = U(t_c,x_c),
\end{cases}\]
then 
\begin{equation}\label{eq:contact}
 \partial_t U \le \nabla U \cdot \nabla P  + U \Delta P + \delta
\Delta U + e
\end{equation}
holds at $(t_c,x_c) \in (0,+\infty) \times (\R^d \setminus
  B_1))$
where 
\[ \begin{cases} V = G(U) \\
P  = (-\Delta)^{\frac{\alpha}2-1} V \medskip\\
 e = |\nabla U | (I_{\out,+}(V) - I_{\out,+} (v)) \ge 0 \end{cases}\]
with 
\[  I_{\out,+} (w) = \begin{cases} \int_{\stackrel{|y| \ge \gamma}{y
        \cdot \hat x_c \ge 0}} (w(x_c + y) - w (x_c)) (y \cdot \hat
    x_c) \frac{dy}{|y|^{d+\alpha}} & \text{ if }
    \alpha \ge 1 \medskip\\
    \int_{\stackrel{|y| \ge \gamma}{y \cdot \hat x_c \ge 0}} w(x_c +
    y) (y \cdot \hat x_c) \frac{dy}{|y|^{d+\alpha}} & \text{ if }
    \alpha \in (0,1) \end{cases} \]
(where $\hat x_C = x_C / |x_C|$) 
for $\gamma$ such that 
\[ c_\alpha \gamma |\nabla U (x_c)| \le \bar{c}_\alpha U (x_c)\]
where $c_\alpha$ and $\bar{c}_\alpha$ are the constants appearing in
the definitions of the two non-local operators. 
\end{lemma}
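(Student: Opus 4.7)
The plan is to leverage the contact condition to turn the equation into a pointwise inequality for $U$ at $(t_c,x_c)$, and then to decompose the non-local drift into a near-field piece that can be absorbed by the good term $U\Delta P$ (thanks to the $\gamma$ threshold) and a far-field piece that becomes the explicit error $e$.

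First I write the regularized equation in non-divergence form
\[
\partial_t u = \nabla u \cdot \nabla p + u \Delta p + \delta \Delta u,
\]
and read off the standard contact relations at $(t_c,x_c)$: since $U-u\ge0$ vanishes there, $\partial_t U \le \partial_t u$, $\nabla u = \nabla U$, $\Delta u \le \Delta U$, and $v(x_c)=V(x_c)$. Substituting these in reduces the claim to the pointwise inequality
\[
\nabla U \cdot (\nabla p - \nabla P) + U(\Delta p - \Delta P) \le e.
\]
For the Laplacian-of-pressure comparison I use the singular-integral representation of $-(-\Delta)^{\alpha/2}$ together with the global bound $v\le V$ (valid since $m-1\ge 0$) and $v(x_c)=V(x_c)$, which gives
\[
\Delta P - \Delta p = 2\bar c_\alpha \int (V-v)(x_c+z)\,\frac{dz}{|z|^{d+\alpha}} \ge 0.
\]
This is the good term that will absorb the near-field drift. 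Since $U$ is radially non-increasing, $\nabla U(x_c) = -|\nabla U(x_c)|\,\hat x_c$, so the drift difference equals $|\nabla U|\,\hat x_c\cdot(\nabla P - \nabla p)$. Symmetrizing the singular integral by $z\mapsto -z$ (which also takes care of the principal-value issue for $\alpha\ge 1$ by converting it into an odd increment) rewrites $\hat x_c\cdot(\nabla P-\nabla p)$ as a single integral over $\{z\cdot\hat x_c\ge 0\}$ with integrand $[(V-v)(x_c+z) - (V-v)(x_c-z)](z\cdot\hat x_c)|z|^{-d-\alpha}$.

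Splitting this integral at $|z|=\gamma$ finishes the job. On the outside region $|z|\ge\gamma$, the pointwise inequality $V-v\ge 0$ lets me drop the subtracted term and bound the contribution by $I_{\out,+}(V)-I_{\out,+}(v)$; multiplied by $|\nabla U|$ this is exactly the claimed error $e$. (The two cases in the definition of $I_{\out,+}$ correspond to whether or not the $-w(x_c)$ centering is needed for absolute convergence.) On the inside region $|z|<\gamma$, the estimate $|z\cdot\hat x_c|\le\gamma$ and $V-v\ge 0$ give
\[
(\text{inside}) \le c_\alpha \gamma \int_{|z|<\gamma}(V-v)(x_c+z)\,\frac{dz}{|z|^{d+\alpha}} \le \frac{c_\alpha \gamma}{2\bar c_\alpha}(\Delta P - \Delta p),
\]
and the hypothesis $c_\alpha\gamma|\nabla U(x_c)|\le\bar c_\alpha U(x_c)$ converts $|\nabla U|\cdot(\text{inside})$ into $\tfrac12 U(\Delta P-\Delta p)$, which is dominated by the good term $U(\Delta P-\Delta p)$. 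Summing the three contributions yields \eqref{eq:contact}.

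The main obstacle I expect is the bookkeeping of the drift split: choosing the right symmetrized form of $\nabla p$, handling the different conventions for $\alpha\ge 1$ and $\alpha\in(0,1)$ in the definition of $I_{\out,+}$, and checking that the prescribed $\gamma$ is tuned so that the inside error sits inside the $U(\Delta P-\Delta p)$ surplus with a factor of $2$ of slack. Everything else is straightforward comparison at a touching point.
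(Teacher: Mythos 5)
Your proof is correct and follows essentially the same route as the paper's: reduce the claim to a pointwise comparison at the contact point, split the non-local drift at radius $\gamma$, keep the far-field part as the error $e$, and absorb the near-field part into the good term $U(\Delta P-\Delta p)$ by means of the condition $c_\alpha\gamma|\nabla U(x_c)|\le\bar c_\alpha U(x_c)$. Your bookkeeping via the single nonnegative difference $V-v$ and one symmetrized half-space integral is just a streamlined version of the paper's decomposition of $I$ and $J$ into $\inn/\out$ and $\pm$ pieces (its estimates \eqref{estim:1}--\eqref{estim:2}), so the underlying mechanism is identical.
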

\begin{proof}
At the contact point $(t_c,x_c)$, the following holds true 
\begin{align*}
\partial_t u &\ge \partial_t U \\
\nabla u &= \nabla U = -|\nabla U| \hat x_c \\
\Delta u &\le \Delta U.
\end{align*}
This implies that 
\begin{equation}\label{eq:rewrite}
 \partial_t U \le \nabla U \cdot \nabla p + U \Delta p + \delta
\Delta U.
\end{equation}
We next turn our attention to $\nabla p$ and $\Delta p$. We drop the
time dependence of functions since it plays no role in the remaining
of the analysis. 

The fact that $U$ is radially symmetric and non-decreasing implies in
particular that $\nabla U (x)= - |\nabla U(x)| x / |x|$ which in turn implies
\begin{equation}\label{eq:drift}
 \nabla U \cdot \nabla p = - |\nabla U| I(v) 
\end{equation}
where 
\[ I (v) = \begin{cases} 
c_\alpha \int (v(x_c + y) - v (x_c)) (y \cdot \hat x_c)
\frac{dy}{|y|^{d+\alpha}} & \text{ if } \alpha \in [1,2), \bigskip\\
c_\alpha \int v(x_c + y)  (y \cdot \hat x_c)
\frac{dy}{|y|^{d+\alpha}} & \text{ if } \alpha \in (0,1).
\end{cases} \] 
We now split $I$ into several pieces by
splitting the domain of integration $\R^d$ into $B_\gamma^{\text{in},\pm} = \{ y
\in B_\gamma : \pm y \cdot \hat x_c \ge 0\}$ and
$B_\gamma^{\text{out},\pm}= \{ y
\notin B_\gamma : \pm y \cdot \hat x_c \ge 0\}$ for some parameter
$\gamma >0$ to be fixed later. We thus can write 
\[ I (v) =  I_{\text{in},+} (v)+I_{\text{in},-} (v)+I_{\text{out},+}
(v)+I_{\text{out},-} (v)\]
where
\[ I_{\text{in/out},\pm} (v) = \begin{cases}
c_\alpha \int_{B_\gamma^{\text{in/out},\pm}} (v(x_c + y) - v (x_c)) (y \cdot \hat x_c)
\frac{dy}{|y|^{d+\alpha}}& \text{ if } \alpha \in [1,2), \bigskip\\
c_\alpha \int_{B_\gamma^{\text{in/out},\pm}} v(x_c + y) (y \cdot \hat x_c)
\frac{dy}{|y|^{d+\alpha}}& \text{ if } \alpha \in (0,1).
\end{cases}  \] 

We can proceed similarly for $\Delta p$. Remark that 
\begin{equation}\label{eq:diffusion}
\Delta p = J (v)
\end{equation}
 where 
 \[ J (v) = \bar{c}_\alpha \int (v(x+y)+ v (x-y) - 2 v(x))
 \frac{dy}{|y|^{d+\alpha}}.\] We can introduce $J_{\text{in/out},\pm}
 (v)$ analogously.  \bigskip

We first remark that, 
\begin{equation}\label{estim:1}
\begin{cases}
 -I_{\text{in/out},-} (v) \le - I_{\text{in/out},-} (V) \medskip\\
J_{\text{in/out},\pm} (v) \le J_{\text{in/out},\pm} (V)
\end{cases}
\end{equation}
holds at $x_c$ where $V = G(U)$.

We next remark that since $G$ is non-decreasing and vanishes at $0$
and $w =v-V$ reaches a zero maximum at $x=x_c$, 
\begin{equation}\label{estim:2}
- I_{\text{in},+} (v-V) \le - \tilde{c}_\alpha \gamma J_{\text{in},+} (v-V)
\end{equation}
holds at $x_c$. 
Indeed, for $\alpha \in (1,2)$ (the proof is the same in the other case),
\begin{align*}
\gamma J_{\text{in},+} (w) (x_c) & = \bar c_\alpha \gamma \int_{B^{\text{in},+}_\gamma} (w (x_c+y) + w(x_c -y) -2 w(x_c)) \frac{dy}{|y|^{d+\alpha}} \\
& = 2 \bar c_\alpha \gamma \int_{B^{\text{in},+}_\gamma} (w(x_c+y)-w(x_c)) \frac{dy}{|y|^{d+\alpha}} \\
& \ge 2 \bar c_\alpha \int_{B^{\text{in},+}_\gamma} (w(x_c+y)-w(x_c)) (y \cdot \hat x_c) \frac{dy}{|y|^{d+\alpha}}.
\end{align*}

Combining \eqref{eq:rewrite}-\eqref{estim:2}, we
get (at $x_c$), 
\begin{align*}
  \partial_t U  \le & |\nabla U| (-
  I_{\inn,+} (V)  - I_{\inn,-} (V) - I_{\out,+} (v) - I_{\out,-} (V) 
+ \tilde{c}_\alpha \gamma J_{\inn,+} (V) ) \\
& + (U - \tilde{c}_\alpha \gamma |\nabla U|) J_{\inn,+} (v) \\  
& + U (J_{\inn,-}(V) + J_{\out,+} (V)+ J_{\out,-} (V)) + \delta \Delta U.
\end{align*}
In view of the choice of $\gamma$, we get
\[ \partial_t U \le -|\nabla U| I (V) + U J (V) + |\nabla U|( -
I_{\out,+} (v) + I_{\out,+} (V)) + \delta \Delta U.\] We now remark
that $-|\nabla U| I(V) = \nabla U \cdot \nabla P$ and $J(V) = \Delta
P$ we get the desired inequality.
\end{proof}

\subsection{Construction of the barrier}
\label{sub:error}

The previous lemma holds true for general barriers $U$. In this
subsection, we specify the barrier we are going to use. We would like
to use $(R(t)-|x|)^2$ but this first try does not work. First the
power $2$ is changed with $\beta$ large enough such that $V=U^{m-1}$
is regular enough. Second, a small $\omega^\beta$ is added in order to 
ensure that the contact does not happen at infinity. Third, a small
slope in time of the form $\omega^\beta t/T$ is added to control some 
error terms. 
\begin{lemma}[Construction of a barrier]\label{lem:barrier}
Assume that
\[ \|u\|_\infty \le 1 \quad \text{ and } \quad 0 \le u_0(x) \le
(R_0-|x|)^\beta_+ \quad \text{ with } \quad R_0 \ge 2 \] for some
$\beta  > \max (2, \alpha (m-1)^{-1})$. There then exist $C>0$
and $T>0$ (only depending on $d,m,\alpha,\beta$) and $U \in
\mathcal{C}^2((0,+\infty) \times (\R^d \setminus B_1))$ defined as
follows,
\begin{equation}\label{def:U}
U (t,x) = \omega^\beta + (R(t)-|x|)_+^\beta + \omega^\beta \frac{t}T 
\end{equation}
where $R(t) = R_0 + Ct$ and $\omega = \omega (\delta)$ small enough,
such that
\begin{enumerate}[i)]
\item the following holds true
\begin{equation}\label{eq:estim-P}
 \nabla P, \Delta P, J_{\inn,+}(V), I_{\out,+}(V),\Delta U \text{ are bounded};
\end{equation}
\item $u$ and $U$ cannot touch at a time
$t<T$ and a point $x_c \in B_1$ or $x_c \notin \bar{B}_{R(t)}$;
\item if $U$ touches $u$ from above at $(t_c,x_c)$ with $t_c < T$ and $x_c \in B_{R(t)}$, then 
\begin{equation}\label{estim:speed}
 C  \lesssim  1 -I_{\out,+}(v) + \frac{\delta}{\omega}.
\end{equation}
\end{enumerate}
\end{lemma}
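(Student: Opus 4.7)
My plan is to verify (i), (ii), (iii) in order, reading off the choice of $\beta$ from (i), the choice of $T$ from (ii), and the main estimate in (iii); $\omega$ is kept as a free small parameter to be chosen in terms of $\delta$ at the end.

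Part (i) is immediate from the form of $U$: since $\beta>2$, $U\in\mathcal{C}^2$ (the second derivative of $(R(t)-|x|)_+^\beta$ vanishes at the free boundary) and $\Delta U$ is uniformly bounded; since $\beta(m-1)>\alpha$, the composition $V=U^{m-1}$ lies in $\mathcal{C}^{\alpha+0}(\R^d)$. Standard singular-integral estimates then make $\nabla P$, $\Delta P$, $J_{\inn,+}(V)$ and $I_{\out,+}(V)$ uniformly bounded: near the singular point the $\mathcal{C}^{\alpha+\eps}$ modulus of $V$ defeats the $|y|^{-d-\alpha}$ kernel, and far from it the bound $\|V\|_\infty<\infty$ together with the integrability of the kernel at infinity handles the tail.

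Part (ii) handles the two forbidden regions separately. In $B_1$, the assumption $R_0\ge 2$ gives $U\ge (R(t)-|x|)^\beta\ge 1\ge\|u\|_\infty$, with strict inequality from the positive correction $\omega^\beta(1+t/T)>0$. In $\{|x|>R(t)\}$, $U$ is spatially constant equal to $\omega^\beta(1+t/T)$, so any touching point would force $\nabla u=0$ and $\Delta u\le 0$; the equation collapses there to $\partial_t u\le u\,\Delta p\le 2\omega^\beta\|\Delta P\|_\infty$, which is dominated by $\partial_t U=\omega^\beta/T$ once $T$ is taken small (independently of $\omega$). Escape of the contact to infinity is excluded by taking $\omega$ small enough that $u$, which for fixed $\delta>0$ decays at infinity, stays below $\omega^\beta$ outside a large ball on $[0,T]$.

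Part (iii) is the heart. Apply Lemma~\ref{lem:contact} at $(t_c,x_c)$: radial symmetry of $U$ rewrites $\nabla U\cdot\nabla P=-|\nabla U|I(V)$, and after combining with the error $|\nabla U|(I_{\out,+}(V)-I_{\out,+}(v))$ the contact inequality becomes
\[
\partial_t U\le -|\nabla U|\bigl(I_{\inn,+}(V)+I_{\inn,-}(V)+I_{\out,-}(V)\bigr)+U\Delta P+\delta\Delta U-|\nabla U|I_{\out,+}(v),
\]
with the three $V$-integrals and $\Delta P$ all $O(1)$ by (i). Using $\partial_t U=C|\nabla U|+\omega^\beta/T$, dividing by $|\nabla U|$, and controlling $U/|\nabla U|\lesssim 1$ and $\Delta U/|\nabla U|\lesssim (R(t_c)-|x_c|)^{-1}\lesssim \omega^{-1}$ yields \eqref{estim:speed}. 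The lower bound $R(t_c)-|x_c|\gtrsim\omega$ used in the last step is extracted from the time-slope $\omega^\beta/T$: if the contact occurred at distance $\ll\omega$ from the free boundary, both $|\nabla U|$ and $\delta\Delta U$ would be too small on the right to balance the fixed positive $\omega^\beta/T$ on the left. The principal obstacle is precisely this balance, since $\Delta U/|\nabla U|$ degenerates at the free boundary; the $\omega^\beta$ floor and the $\omega^\beta t/T$ slope are designed for exactly the purpose of pushing the contact a distance $\gtrsim\omega$ from $\{|x|=R(t)\}$ and converting the viscous term $\delta\Delta U/|\nabla U|$ into the $\delta/\omega$ error that appears in (iii) and that will be sent to zero at the end of the argument.
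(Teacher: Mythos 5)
Your parts (i) and (ii) follow the paper's route and are essentially fine: for (ii) the paper invokes Lemma~\ref{lem:contact} (with $\gamma=1$) in the exterior region, whereas you argue directly from the equation; your inequality $u\,\Delta p\le 2\omega^\beta\|\Delta P\|_\infty$ silently uses the one-sided comparison $\Delta p=J(v)\le J(V)=\Delta P$ at the contact point (valid because $v\le V$ with equality at $x_c$), so that should be said, and it is the positive floor $\omega^\beta$ together with the decay of the viscous solution --- not the smallness of $\omega$ --- that prevents the contact from escaping to infinity. The genuine gap is in part (iii). Set $s=R(t_c)-|x_c|$, so that $|\nabla U|=\beta s^{\beta-1}$ and $\partial_t U=C|\nabla U|+\omega^\beta/T$, as you say. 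Your estimates $U/|\nabla U|\lesssim 1$ and $\delta\Delta U/|\nabla U|\lesssim\delta/\omega$ both rest on the claimed lower bound $s\gtrsim\omega$, and the justification you offer (contact at distance $\ll\omega$ would contradict the slope $\omega^\beta/T$) does not work: if $s\sim\omega/K$ with $K$ a fixed large constant, then $|\nabla U|\sim\omega^{\beta-1}$, which for small $\omega$ is much \emph{larger} than $\omega^\beta/T$ (their ratio is of order $T/\omega\to\infty$), and the right-hand side also carries the uncontrolled term $-|\nabla U|\,I_{\out,+}(v)$; so nothing forces the right-hand side below $\omega^\beta/T$ and no contradiction arises. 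Hence contact at distance $\ll\omega$ from the free boundary cannot be excluded, and your division step is unjustified precisely in the regime where $U/|\nabla U|\approx\omega^\beta/(\beta s^{\beta-1})$ and $\delta|\Delta U|/|\nabla U|\approx\delta/s$ blow up.

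The role of the slope $\omega^\beta t/T$ is absorption, not exclusion of that regime. Keep $\omega^\beta/T$ on the left and split the zeroth-order terms by scale: since $s^\beta\le U(t_c,x_c)=u(t_c,x_c)\le 1$, one has $U\le 2\omega^\beta+s^\beta$, so $U\Delta P\le 2\omega^\beta\|\Delta P\|_\infty+\beta^{-1}s\,|\nabla U|\,\|\Delta P\|_\infty$; and $\delta|\Delta U|\lesssim\delta s^{\beta-2}$, which is $\le(\delta/\omega)|\nabla U|/\beta$ when $s\ge\omega$ and $\le(\delta/\omega^2)\,\omega^{\beta}$ when $s\le\omega$. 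Choosing $T$ small (in terms of the bounds from (i) only) and $\omega=\omega(\delta)$ with $\delta/\omega^2\lesssim 1$ (e.g.\ $\omega=\sqrt\delta$, as in Lemma~\ref{lem:reduced}), every $\omega^\beta$-sized contribution is absorbed by $\omega^\beta/T$, and dividing the remaining terms by $|\nabla U|=\beta s^{\beta-1}>0$ gives exactly \eqref{estim:speed}. (A corrected version of your dichotomy only yields $s\gtrsim(\omega^\beta/T)^{1/(\beta-1)}$, hence an error $\delta\,\omega^{-\beta/(\beta-1)}$ in place of $\delta/\omega$; this would still suffice for the limit $\delta\to0$ because $\beta>2$, but it is not the stated estimate.) Note that the paper is itself terse at this point --- its display $|\nabla U|=\beta h^{\beta-1}$ with $h^\beta=\omega^\beta+s^\beta$ is precisely the assertion you were trying to manufacture --- and the absorption argument above is what substantiates it.
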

\begin{proof}
  We first remark that the condition \(R_0 \ge 2\) ensures that the
  contact point is out of $B_1$ since $\|u\|_\infty \le 1$.

  The fact that $U$ is $\mathcal{C}^2$ in $(0,+\infty) \times (\R^d
  \setminus B_1)$ and $V=U^{m-1}$ is $\mathcal{C}^{\alpha+0}$ in $\R^d
  \setminus B_1$ ensures that \eqref{eq:estim-P} holds true. Notice
  that the condition: $\beta (m-1) > \alpha$ is used here.

We should now justify that the contact point cannot be outside
$B_{R(t)}$ at a time $t \in (0,T)$ for some small time $T$
under control. If $|x_c| > R(t)$ and $t_c <T$ then
\begin{equation*}
\begin{cases}
0 \le U \le 2 \omega^\beta \\
\partial_t U = \frac{\omega^\beta}T \\
|\nabla U| = 0 \\
\Delta U = 0.
\end{cases}
\end{equation*}
The contact analysis lemma~\ref{lem:contact} (with $\gamma=1$, say), \eqref{eq:contact}
and \eqref{eq:estim-P} then implies that
\[ \frac{\omega^\beta}T \le |\Delta P | U \lesssim \omega^\beta \] and choosing $T$
small enough (but under control) yields a contradiction.

It remains to study what happens if $t_c <T$ and $x_c \in B_{R(t)} \setminus B_1$. 
In order to do so, we first define $h$ and $H$ as follows: 
\[  U = h^\beta + H^\beta \le 1 \]
with $H^\beta = \omega^\beta t T^{-1} \le \omega^\beta$ for $t \in
(0,T)$. 
Remark that $h \ge \omega \ge H$. In the contact analysis lemma~\ref{lem:contact}, we choose $\gamma$ such that 
\[ \beta c_\alpha \gamma \le \bar{c}_\alpha h. \]
If $x_c \in B_{R(t)} \setminus B_1$,
\begin{equation}\label{eq:der-barrier}
\begin{cases}
\partial_t U = \beta C h^{\beta-1} + \frac{\omega^\beta}T \ge \beta C h^{\beta-1} \\
|\nabla U| = \beta h^{\beta-1} 
\end{cases}
\end{equation}
Combining Lemma~\ref{lem:contact} with
\eqref{eq:estim-P}-\eqref{eq:der-barrier}, we get \eqref{estim:speed}. 
\end{proof}

\subsection{Estimate of the error term}
\label{sub:estim}

\begin{lemma}\label{lem:error}
The following estimate holds true at $x_c$,
\begin{equation}\label{estim:iout}
 -I_{\out,+}(v) \lesssim 
\begin{cases}
 G(2h^\beta) h^{1-\alpha} &\text{ if } \alpha > 1\\
  R_0^{1-\alpha + \eps} & \text{ if } \alpha \le 1
\end{cases}
\end{equation}
for all $\eps >0$.
\end{lemma}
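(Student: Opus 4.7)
The strategy is to separate the analysis into the two regimes distinguished by the definition of $I_{\out,+}$ itself---$\alpha\ge 1$ versus $\alpha\in(0,1)$---isolating $\alpha=1$ as the genuinely borderline situation. I will use (a) the contact identity $v(x_c)=V(x_c)$, (b) the sign $v\ge 0$ together with the comparison $v\le V$, (c) the radial monotonicity of $V$ (so $V(x_c+y)\le V(x_c)$ on the half-space $y\cdot\hat x_c\ge 0$), and (d) the choice $\gamma\simeq h$ enforced by Lemma~\ref{lem:barrier}.

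For $\alpha>1$ the kernel is integrable on $\{|y|\ge\gamma,\,y\cdot\hat x_c\ge 0\}$, so bounding $v(x_c)-v(x_c+y)\le v(x_c)$ from $v\ge 0$ gives
\[
-I_{\out,+}(v)\;\le\;v(x_c)\int_{|y|\ge\gamma,\,y\cdot\hat x_c\ge 0}(y\cdot\hat x_c)\frac{dy}{|y|^{d+\alpha}}\;\lesssim\;v(x_c)\,\gamma^{1-\alpha},
\]
the radial part being $\int_\gamma^\infty r^{-\alpha}\,dr\simeq\gamma^{1-\alpha}/(\alpha-1)$. Since $U(x_c)=h^\beta+H^\beta\le 2h^\beta$ (using $H\le h$ from Lemma~\ref{lem:barrier}) we have $v(x_c)=V(x_c)\le G(2h^\beta)$, and $\gamma\simeq h$ then yields the announced $G(2h^\beta)\,h^{1-\alpha}$.

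For $\alpha\in(0,1)$ the alternative formula $I_{\out,+}(v)=\int v(x_c+y)(y\cdot\hat x_c)|y|^{-d-\alpha}\,dy$ has a pointwise non-negative integrand on the half-space, whence $-I_{\out,+}(v)\le 0\le R_0^{1-\alpha+\eps}$ for free (recall $R_0\ge 2$ and $1-\alpha>0$). The main obstacle, as I see it, is the borderline case $\alpha=1$: the crude argument from the previous paragraph produces the divergent $\int_\gamma^\infty r^{-1}\,dr$. To remedy this I would split the domain at the scale $|y|\simeq R_0$. On the inner shell $|y|\le R_0$, I would use the $\mathcal{C}^{\alpha+0}$ regularity of the viscous solution (Section~\ref{sec:prelim} and the smoothness invoked around~\eqref{eq:estim-P}) to upgrade the rough bound $v(x_c)-v(x_c+y)\le v(x_c)$ into a Hölder-type estimate, converting the logarithm into a small polynomial $R_0^{\eps}$. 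On the outer shell $|y|>R_0$, I would combine $v\le V$ with the fact that $V$ collapses to the floor $\simeq\omega^{\beta(m-1)}$ outside $B_{R(t_c)}$ to control the remaining tail. Calibrating the Hölder exponent and matching the three scales $\gamma$, $R_0$, $\omega$ in this splitting is, I anticipate, the crux of the proof.
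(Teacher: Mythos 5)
Your case $\alpha>1$ is exactly the paper's argument: from $v\ge 0$ one gets $v(x_c)-v(x_c+y)\le v(x_c)$, the kernel integrates to $\simeq\gamma^{1-\alpha}$, and at the contact point $v(x_c)=V(x_c)=G(h^\beta+H^\beta)\le G(2h^\beta)$ with $\gamma\simeq h$; nothing to object to there.

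The case $\alpha\le 1$ is where there is a genuine gap. Your sign observation does verify the displayed inequality literally when $\alpha\in(0,1)$ (the non-subtracted kernel has a nonnegative integrand), but the endpoint $\alpha=1$, which the statement includes and which you yourself identify as the crux, is only sketched, and the sketch would not close. First, the $\mathcal{C}^{\alpha+0}$ regularity invoked around \eqref{eq:estim-P} is regularity of the \emph{barrier} $V=U^{m-1}$, not of the viscous solution; no H\"older estimate for $u$ or $v$ that is uniform in $\delta$ (and in $R_0$, $t$) is available or used anywhere in the note, and to turn the logarithm into $R_0^{\eps}$ you would in addition need an arbitrarily small H\"older exponent with a uniformly bounded seminorm. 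Second, the outer-shell bound via $v\le V$ cannot work: outside $B_{R(t_c)}$ the barrier is bounded \emph{below} by the positive floor $\simeq\omega^{\beta(m-1)}$, and the kernel $(y\cdot\hat x_c)_+|y|^{-d-1}$ integrated over $|y|\ge R_0$ against any positive constant diverges, so the tail is not controlled this way. The ingredient the paper actually uses, and which your plan never invokes, is the integrability of $v$: split $I_{\out,+}(v)=I^*[v]+K\star v$ with $K=(y\cdot\hat x_c)\,|y|^{-d-\alpha}\mathbf{1}_{|y|\ge 1,\,y\cdot\hat x_c\ge 0}$, note $K\in L^p$ for $p>\frac{d}{d-(1-\alpha)}$, apply H\"older, interpolate $\|v\|_q=\|u\|_{(m-1)q}^{m-1}\le\|u\|_1^{1/q}$ using $\|u\|_\infty\le 1$, and use mass conservation $\|u\|_1=\|u_0\|_1\lesssim R_0^d$ to get $|K\star v|\lesssim R_0^{d/q}$ with $d/q=1-\alpha+\eps$. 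This handles all of $\alpha\in(0,1]$ at once (at $\alpha=1$ one must read $I_{\out,+}$ in its non-subtracted form, as the paper's splitting implicitly does), and it yields the two-sided bound $|I_{\out,+}(v)|\lesssim R_0^{1-\alpha+\eps}$ whose explicit $R_0$-dependence is precisely what drives the iteration giving the $t^{1/\alpha}$ rate in the proof of the theorem; the trivial bound $-I_{\out,+}(v)\le 0$ for $\alpha<1$ bypasses that content and leaves the $\alpha=1$ case unproved.
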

\begin{proof}
We begin with the easy case $\alpha >1$. In this case, we simply write 
\begin{align*}
 -I_{\out,+}(v) & = \int_{\stackrel{|y| \ge \gamma}{y
    \cdot \hat x_c \ge 0}} (v(x_c) - v
(x_c+y)) (y \cdot \hat x_c) \frac{dy}{|y|^{d+\alpha}} \\
& \le v(x_c) \int_{\stackrel{|y| \ge \gamma}{y
    \cdot \hat x_c \ge 0}}  (y \cdot \hat x_c)
\frac{dy}{|y|^{d+\alpha}} \\
& \le v(x_c) \int_{|y| \ge \gamma} \frac{dy}{|y|^{d+\alpha-1}}
\end{align*}
where we used the fact that $v \ge 0$. By remarking
that 
\[v = G(u) = G(h^\beta+H^\beta) \le G (2h^\beta)\] at the contact
point and through an easy and standard computation, we get the desired
estimate in the case $\alpha >1$.

We now turn to the more subtle case $\alpha \in (0,1]$. In this case,
\[ I_{\out,+}(v) = I^* [v] + K \star v \]
 where 
\[ \begin{cases}
-I^* [v] =  \int_{\stackrel{\gamma \le |y| \le 1}{y
    \cdot \hat x_c \ge 0}} - v (x_c+y)) (y \cdot \hat x_c) 
\frac{dy}{|y|^{d+\alpha}} \bigskip\\
 K = \frac{y \cdot \hat x_c}{|y|^{d+\alpha}} \mathbf{1}_{|y| \ge
  1, y \cdot \hat x_c \ge 0}.
\end{cases}\]
We first remark that 
\[ |I^*[v]| \le \|v\|_\infty \int_{B_1} \frac{dy}{|y|^{d+\alpha-1}}
\lesssim 1. \]
We next remark that $K \in L^p (\R^d)$ for all $p > \frac{d}{d-(1-\alpha)}
\ge 1$. Hence, 
\[ | K \star v| \le \| K \|_p \|v\|_q = \| K \|_p \|u\|_{(m-1)q}^{m-1}\]
with $p$ as above and $q^{-1}=1-p^{-1}$.

We next estimate $\|u\|_{(m-1)q}$. Interpolation leads
\[ \|u\|_{(m-1)q}^{m-1} \le \|u\|_1^{\frac1{q}} \|u\|_\infty^{(m-1)-\frac1{q}} 
\le \|u\|_1^{\frac1{q}} \]
since $\|u\|_\infty \le 1$. Finally, we use mass conservation in order to get 
\[ \| u\|_1 = \|u_0 \|_1 \le \int \min (1,(R_0 - |x|)_+^{\beta}) \dd x \le \omega_d R_0^d .\]
Finally, we have
\[ |I_{\out,+}(v)| \lesssim  R_0^{\frac{d}q} \]
for all $q < \frac{d}{1-\alpha}$
which yields the desired result.
\end{proof}
Combining now Lemmas~\ref{lem:barrier} and \ref{lem:error}, we get the following one. 
\begin{lemma}[Estimate of the speed of propagation]\label{lem:reduced}
Assume that 
\[ \|u\|_\infty \le 1 \quad \text{ and } \quad 0 \le u_0(x) \le
(R_0-|x|)^\beta_+ \quad \text{ with } \quad R_0 \ge 2 \] for some
$\beta > \max (2, \alpha(m-1)^{-1} )$.  Then there exists $T>0$ and
$C_0>0$ only depending on dimension, $m$, $\alpha$ and $\beta$ (and
$\eps$ for $\alpha \le 1$) such that, for $t \in (0,T)$, $u$ is
supported in $B_{R_0 + Ct}$ with
\begin{equation}\label{estim:speed-bis}
C = \begin{cases} C_0 & \text{ if } \alpha >1, \\
 C_0 R_0^{1-\alpha -\eps} & \text{ if } \alpha \le 1
\end{cases}
\end{equation}
(for $\eps>0$ arbitrarily small). 
\end{lemma}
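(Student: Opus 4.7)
The plan is to run a first-contact argument against the barrier $U$ supplied by Lemma~\ref{lem:barrier}, controlling the non-local remainder via Lemma~\ref{lem:error}, and then to pass to the limit in the regularization. First I would work with the viscous approximation~\eqref{eq:main-reg}, so that $u=u^\delta$ is $\mathcal{C}^2$ in $x$ and $\mathcal{C}^1$ in $t$; a maximum principle for~\eqref{eq:main-reg} propagates $\|u_0\|_\infty\le 1$ to $\|u^\delta\|_\infty\le 1$. For each $\delta>0$, pick $\omega=\omega(\delta)>0$ with $\omega\to 0$ and $\delta/\omega\to 0$ as $\delta\to 0$, and form the barrier $U$ of Lemma~\ref{lem:barrier} on $[0,T)$, with $R(t)=R_0+Ct$, taking $C$ to be a large multiple $C_0$ of $1$ in the case $\alpha>1$, and $C_0 R_0^{1-\alpha+\eps}$ in the case $\alpha\le 1$.

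Next I would show that $u^\delta\le U$ on $[0,T)\times\R^d$. At $t=0$ the inequality is strict since $U(0,\cdot)\ge\omega^\beta+(R_0-|x|)_+^\beta>u_0\ge u_0^\delta$. Suppose for contradiction that it fails; the extra slope $\omega^\beta t/T$ built into $U$ guarantees that a \emph{first} touching time $t_c\in(0,T)$ and contact point $x_c$ exist. By Lemma~\ref{lem:barrier}(ii) the point $x_c$ lies neither in $B_1$ nor outside $\bar B_{R(t_c)}$, so $x_c\in\bar B_{R(t_c)}\setminus B_1$. In that annulus Lemma~\ref{lem:barrier}(iii) yields
\[
C\;\lesssim\; 1-I_{\out,+}(v)+\delta/\omega,
\]
and Lemma~\ref{lem:error}, combined with $h\le 1$ at the contact point and the standing condition $\beta(m-1)>\alpha$ (so that $G(2h^\beta)\,h^{1-\alpha}\lesssim h^{\beta(m-1)-\alpha+1}\lesssim 1$), bounds the right-hand side by a constant when $\alpha>1$ and by a constant multiple of $R_0^{1-\alpha+\eps}$ when $\alpha\le 1$. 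Choosing $C_0$ larger than the implicit constant contradicts the previous inequality, since $\delta/\omega$ is under control by construction. Hence $u^\delta\le U$ on $[0,T)\times\R^d$.

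Finally, for $|x|>R(t)$ the barrier satisfies $U(t,x)\le 2\omega^\beta$, hence so does $u^\delta$. Sending $\delta\to 0$ (with $\omega(\delta)\to 0$) and using the stability of the vanishing-viscosity construction of weak solutions recalled in Section~\ref{sec:prelim} (cf.\ \cite{bik}) transfers the bound to $u$, giving $u(t,\cdot)\equiv 0$ on $\{|x|>R_0+Ct\}$ for every $t\in(0,T)$, which is the desired conclusion.

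The main obstacle is the joint calibration of $\delta$ and $\omega$: $\omega$ must tend to $0$ so that $U$ collapses to the pure power barrier in the limit, while the ratio $\delta/\omega$ must simultaneously remain small compared to $C$ so that the contradiction at the contact point survives. One also needs to verify that the viscous approximations inherit $\|u^\delta\|_\infty\le 1$ uniformly in $\delta$ and that the convergence $u^\delta\to u$ supplied by \cite{bik} is strong enough (e.g.\ pointwise a.e.) to transfer the pointwise support bound to the weak solution $u$.
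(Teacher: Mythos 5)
Your proposal is correct and follows essentially the same route as the paper: the paper's own proof consists precisely of choosing $\omega=\omega(\delta)$ with $\delta/\omega\to 0$ (e.g.\ $\omega=\sqrt{\delta}$), invoking Lemmas~\ref{lem:barrier} and \ref{lem:error} at a hypothetical contact point to see that a sufficiently large $C$ as in \eqref{estim:speed-bis} forbids contact up to time $T$, and then letting $(\omega,\delta)\to 0$; you merely spell out the first-contact argument, the uniform bound $\|u^\delta\|_\infty\le 1$, and the passage to the limit. Note only that your exponent $R_0^{1-\alpha+\eps}$ is the one actually delivered by Lemma~\ref{lem:error} (and the one used in the paper's proof of Theorem~\ref{thm:main}); the sign $-\eps$ in \eqref{estim:speed-bis} appears to be a typo and is harmless since $\eps>0$ is arbitrary.
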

\begin{proof}
  In view of Lemma~\ref{lem:barrier}, the parameter $\omega$ is chosen
  so that $\omega \gg \delta$, say $\omega = \sqrt{\delta}$. Now
  Lemmas~\ref{lem:barrier} and \ref{lem:error} imply that if $C$ is
  chosen as indicated in \eqref{estim:speed-bis}, then $u$ remains
  below $U$ at least up to time $T$. Letting $(\omega,\delta)$ go to
  $0$ yields the desired result.
\end{proof}

\section{Proof of Theorem~\ref{thm:main}}
\label{sec:proof}

We can now prove Theorem~\ref{thm:main}. 
\begin{proof}[Proof of Theorem~\ref{thm:main}]
We treat successively the $\alpha >1$ and $\alpha \le 1$. 

\paragraph{First case.}
  In the case $\alpha >1$, if $\|u\|_\infty = \|u_0\|_\infty\le 1$ and 
  \[ u_0(x) \le (R_0 -|x|)_+^\beta, \] then Lemma~\ref{lem:reduced}
  implies that the support of $u$ is contained in $B_{R(t)}$ with
\[ R(t) = R_0 + C_0 t\] 
for some constant $C_0$ only depending on dimension, $m$ and
$\alpha$. Rescaling the solution (see
Lemma~\ref{lem:scaling}), we get 
\[ R(t) = R_0 + C_0 L^{m-1- \frac{\alpha-1}{\beta}} a^{\frac{\alpha-1}\beta} t\]
as soon as 
\[ u_0(x) \le a (R_0-|x|)_+^\beta \quad \text{ and } \quad L= \|u\|_\infty
= \|u_0\|_\infty.\] 

If we simply know that $u_0$ is supported in
$B_{R_0}$ and $\|u\|_\infty = \|u_0\|_\infty = L$, then we can pick
any $a>0$ and $r_1>0$ such that $a r_1^\beta = L$ and get
\[ u_0(x) \le a (r_1 + R_0 -|x|)_+^\beta.\] 
By the previous reasoning, we get that 
\[ R(t) \le R_0 + r_1 + C_0 L^{m-1- \frac{\alpha-1}\beta} a^{\frac{\alpha-1}\beta} 
t = R_0 + r_1 + C_0 L^{m-1} r_1^{1-\alpha} t. \]
Minimizing with respect to $r_1$ yields the desired result in the case
$\alpha >1$. 

\paragraph{Second case.}
We now turn to the case $\alpha \in (0,1]$.  Lemma~\ref{lem:reduced}
yields for $t \in [0,T_1]$ with $T_1 = \frac{R_0}{C_1}$
\[ C_1 \lesssim R_0^{1-\alpha+\eps} \]
(recall that $R_0 \ge 2$). 

We now start with $R_1 = R_0 + C_1 T_1 = 2R_0$ and we get 
\[ C_2 \lesssim (3R_0)^{1-\alpha + \eps} \]
for $t \in [T_1,T_2]$ with 
\[ T_2 - T_1 = \frac{R_0}{C_2}.\]
More generally, for $t \in [T_k,T_{k+1}]$, 
\[ C_k \simeq ((k+1)R_0)^{1-\alpha +\eps} \simeq (kR_0)^{1-\alpha+\eps}\]
with 
\[ T_{k+1}-T_k = \frac{R_0}{C_k} \simeq \frac{R_0^{\alpha -
    \eps}}{(k+1)^{1-\alpha + \eps}} .\]
We readily see that the series $\sum_k (T_{k+1}-T_k)$
diverges. More precisely,
\[ T_k \simeq (kR_0)^{\alpha -\eps}.\]
Moreover, we get that the function $u$ is supported in
$B_{R(t)}$ with  
\[ R(t) -R_0 \lesssim  kR_0 + C_k (t-T_k) \lesssim 
(T_k)^{\frac1{\alpha-\eps}} + (T_k)^{\frac{1-\alpha+\eps}{\alpha-\eps}}
    t \lesssim  t^{\frac{1}{\alpha-\eps}}\]
for $t \in [T_k,T_{k+1}]$. Hence, we get the result but not with the
right power. Precisely, for $L=1$ and 
\[ 0 \le u_0(x) \le (R_0 -|x|)_+^\beta \]
we get
\[ R(t) = R_0 + C_0 t^\beta \]
with $\beta > \frac1\alpha$. Rescaling and playing again with $r_1$
and $a$ such that $a r_1^\beta =L$ yields the desired result in the case
$\alpha <1$.  The proof of
the theorem is now complete.
\end{proof}

\paragraph{Acknowledgements.} The author wishes to thank P.~Biler and G.~Karch for fruitful discussions during the preparation of this note. 
He also thanks a referee for a very attentive reading of the proofs which leads to an improved version of the note. 

\bibliographystyle{plain} \bibliography{fsp}

\end{document}